\newtheorem{theorem}{Theorem}[section]
\newtheorem{lemma}[theorem]{Lemma}
\newtheorem{proposition}[theorem]{Proposition}
\newcommand{\ch}{\operatorname{char}}
\newcommand{\id}{\mathrm{id}}
\newcommand{\supp}{\operatorname{supp}}
\newcommand{\gr}{\operatorname{gr}}
\newcommand{\Sym}{\operatorname{Sym}}
\newcommand{\FM}{\operatorname{FM}}
\newcommand{\GKdim}{\operatorname{GKdim}}
\newcommand{\clKdim}{\operatorname{clKdim}}
\newcommand{\tr}{\operatorname{tr}}
\newenvironment{proof}{\par\noindent{\bf Proof.}}{$\qed$\par\bigskip}
\newcommand{\qed}{\enspace\vrule  height6pt  width4pt  depth2pt}
\begin{document}

\title{Finitely Presented Monoids and Algebras defined by Permutation Relations of Abelian Type, II}

\author{ Ferran Ced\'{o}\footnote{
 Research partially supported by grants of MICIIN (Spain)
MTM2011-28992-C02-01, Generalitat de Catalunya 2009 SGR 1389.} \and
Eric Jespers\footnote{Research supported in part by Onderzoeksraad
of Vrije Universiteit Brussel and Fonds voor Wetenschappelijk
Onderzoek (Belgium).} \and Georg Klein}
\date{}
\maketitle

\begin{abstract}
The class of finitely presented algebras $A$ over a field $K$ with a
set of generators $x_{1},\ldots ,x_{n}$ and defined by homogeneous
relations of the form
 $x_{i_1}x_{i_2}\cdots x_{i_l} =x_{\sigma (i_1)} x_{\sigma (i_2)} \cdots
 x_{\sigma (i_l)}$,
where $l\geq 2$ is a given  integer and  $\sigma$ runs through a subgroup $H$ of $\Sym_{n}$, is
considered. It is shown that the underlying monoid  $S_{n,l}(H)=\langle x_1,x_2,\dots ,x_n\mid
x_{i_1}x_{i_2}\cdots x_{i_l} =x_{\sigma (i_1)} x_{\sigma (i_2)}
\cdots
 x_{\sigma (i_l)}, \;
 \sigma \in H, \;   i_1,\ldots ,i_l \in  \{ 1, \ldots , n \}
\rangle $ is cancellative if and only if $H$ is semiregular and abelian. In this case $S_{n,l}(H)$ is a submonoid of its universal group $G$.
If, furthermore, $H$ is transitive then the periodic elements $T(G)$ of $G$ form a finite  abelian subgroup, $G$ is periodic-by-cyclic and it is a central localization of $S_{n,l}(H)$, and
the Jacobson radical of the algebra $A$ is determined by the Jacobson radical of the group algebra $K[T(G)]$.
Finally, it is shown that if $H$ is an arbitrary group that is transitive then $K[S_{n,l}(H)]$ is a Noetherian PI-algebra of Gelfand-Kirillov dimension one; if furthermore $H$ is abelian then often $K[G]$ is a principal ideal ring. In case $H$ is not transitive then $K[S_{n,l}(H)]$ is of exponential growth.
\end{abstract}

\noindent {\it Keywords:} semigroup ring,
finitely presented, semigroup, Jacobson radical,
semiprimitive,  primitive. \\ {\it Mathematics
Subject Classification:} Primary 16S15, 16S36,
20M05; Secondary  20M25, 16N20.

\section{Introduction}

Many  concrete classes of finitely presented algebras defined by
homogeneous relations receive substantial interest in the
literature. For example, Chinese and plactic algebras and algebras
arising from set theoretic solutions of the Yang-Baxter equation.
Motivated by these examples,  the authors and Okni\'{n}ski in
\cite{alghomshort,altalgebra,alt4algebra,symcyclic} investigated
algebras $A$ over a field $K$ that are of the following type:
$$A=K\langle a_1,a_2,\dots ,a_n\mid a_1a_2\cdots
a_n= a_{\sigma (1)}a_{\sigma (2)}\cdots a_{\sigma (n)},\; \sigma\in
H\rangle , $$ where $H$ is a subgroup of the symmetric group
$\Sym_n$ of degree $n$.   Results were obtained mainly in case $H$
is the full symmetric group \cite{alghomshort}, the alternating
group of degree $n$ \cite{altalgebra,alt4algebra}, or an abelian
group \cite{symcyclic}.  Clearly such an algebra $A$ is a monoid
algebra $K[S_{n}(H)$] of the monoid $S_n(H)=\langle a_1,a_2,\dots
,a_n\mid a_1a_2\cdots a_n= a_{\sigma (1)}a_{\sigma (2)}\cdots
a_{\sigma (n)},\; \sigma\in H\rangle $. Hence, in all cases studied,
a thorough investigation of the monoid $S_{n}(H)$ is crucial and
leads to a structural description of the semigroup algebra
$K[S_n(H)]$.

The aim of this paper is to widen these investigations to the  new
class of   finitely presented algebras of the type:
$$A=K\langle x_1,x_2,\dots ,x_n\mid
x_{i_1}x_{i_2}\cdots x_{i_l} =x_{\sigma (i_1)} x_{\sigma (i_2)}
\cdots
 x_{\sigma (i_l)},\; \sigma\in
H\rangle , $$ where $l\geq 2$ is an integer and $H$ is a subgroup of $\Sym_n$. Again  $A$ is a semigroup algebra $K[S_{n,l}(H)]$,
where $$S_{n,l}(H)=\langle x_1,x_2,\dots ,x_n\mid
x_{i_1}x_{i_2}\cdots x_{i_l} =x_{\sigma (i_1)} x_{\sigma (i_2)}
\cdots
 x_{\sigma (i_l)},\; \sigma\in
H\rangle $$ is the monoid with the ``same'' presentation as the
algebra. So, the defining homogeneous relations, in general, do not involve all the defining generators and hence these algebras reflect
more the original motivating examples.

In  Section 2, properties of the monoid $S_{n,l}(H)$ are obtained.
We prove that $S_{n,l}(H)$ is cancellative if and only if $H$ is
semi-regular and abelian. In this case,  $S_{n,l}(H)$ is a submonoid
of its universal group $G$.  When $H$ is a transitve abelian
subgroup of $\Sym_n$, then the periodic elements $T(G)$ of $G$ form
a finite abelian subgroup, $G$ is periodic-by-cyclic and
$G=S_{n,l}(H)\langle x_1^l\rangle^{-1}$.

In Section 3, we consider the algebra $K[S_{n,l}(H)]$. Using the results obtained on the group of
fractions $G$, we prove that for a transitive abelian subgroup $H$ of $\Sym_n$, the Jacobson radical $J(K[G])$ of $K[G]$ is contained in $ J(K[T(G)])K[G]$.
If in addition $\ch (K) = 0$ or $\ch (K) = p$ and $p \nmid |H| $, then $J(K[S_{n,l}(H)])=0$ and $J(K[G])=0$. Furthermore, for arbitrary $H$, the growth of $K[S_{n,l}(H)]$ is
exponential if $H$ is non-transitive, and if $H$ is transitive, then $K[S_{n,l}(H)]$ is a Noetherian PI-algebra with
Gelfand-Kirillov dimension one.  Sufficient conditions for $K[S_{n,l}(H)]$ to be a finitely generated free module
over a finitely generated free $K$-algebra as well as sufficient conditions for $K[G]$ to be a PI-algebra and a principal ideal ring
are given.

\section{Permutation Relations of length $l$}
Let $n$ be a positive integer. Let $l$ be an integer greater than
$1$. Let $H$ be a subgroup of the symmetric group $\Sym_n$. We
define the monoid $S_{n,l}(H)$ with presentation
$$S_{n,l}(H)=\langle x_1,x_2,\dots ,x_n\mid
x_{i_1}x_{i_2}\cdots x_{i_l} =x_{\sigma (i_1)} x_{\sigma (i_2)}
\cdots
 x_{\sigma (i_l)}, \; \sigma \in H,\;  i_1,\ldots ,  i_l \in  \{ 1, \ldots , n \}
\rangle.$$ Since the defining relations of $S_{n,l}(H)$ are
homogeneous, it has a natural degree or length function and for $s
\in S_{n,l}(H)$, its length is denoted by $|s|$.

We begin with an easy consequence of the definition of $S_{n,l}(H)$.

\begin{lemma}\label{central}
Let $H$ be a subgroup of $\Sym_n$. Let $X_i=\{ \sigma(i)\mid
\sigma\in H\}$. If $j\in X_i$, then $x_i^l=x_j^l$ and therefore
$x_i^lx_j=x_jx_i^l$ in $S_{n,l}(H)$. In particular, if $H$ is
transitive, then $x_1^l$ is a central element in $S_{n,l}(H)$.
$\qed$
\end{lemma}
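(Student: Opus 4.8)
The plan is to read off the identity $x_i^l = x_j^l$ directly from a well-chosen instance of the defining relations, and then to obtain the commutativity and centrality statements as one-line consequences.

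First I would unpack the hypothesis $j \in X_i$: by definition of $X_i$ there is some $\sigma \in H$ with $\sigma(i) = j$. Now specialize the defining relation $x_{i_1} x_{i_2} \cdots x_{i_l} = x_{\sigma(i_1)} x_{\sigma(i_2)} \cdots x_{\sigma(i_l)}$ to the case $i_1 = i_2 = \cdots = i_l = i$; this is a legitimate instance since in the presentation of $S_{n,l}(H)$ the indices $i_1, \ldots, i_l$ range independently over $\{1, \ldots, n\}$. The left-hand side is then $x_i^l$ and the right-hand side is $x_{\sigma(i)}^l = x_j^l$, so $x_i^l = x_j^l$ holds in $S_{n,l}(H)$.

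For the second assertion, I would multiply the equality $x_i^l = x_j^l$ on the right by $x_j$ and regroup: $x_i^l x_j = x_j^l x_j = x_j^{l+1} = x_j x_j^l = x_j x_i^l$, using $x_j^l = x_i^l$ again in the last step. Finally, if $H$ is transitive then $X_1 = \{1, \dots, n\}$, so the previous paragraph yields $x_1^l = x_j^l$ for every $j$, whence $x_1^l x_j = x_j x_1^l$ for every generator $x_j$; since the $x_j$ generate the monoid, $x_1^l$ lies in the center of $S_{n,l}(H)$.

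There is essentially no obstacle here; the only point requiring a moment's care is the observation just noted, that relations with repeated indices $i_1 = \cdots = i_l$ are indeed among those imposed, so that no appeal to cancellativity or to any later structural result is needed.
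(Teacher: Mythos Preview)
Your proof is correct and is exactly the intended argument; the paper itself gives no proof at all (the lemma is stated with an immediate $\qed$), since the equality $x_i^l=x_j^l$ is a direct instance of the defining relations with $i_1=\cdots=i_l=i$, and the remaining claims follow at once as you wrote.
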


\begin{lemma}\label{form}
Let $H$ be a subgroup of $\Sym_n$. Assume that the different orbits
under the action of $H$ are $X_{i_1},\dots,X_{i_r}$, where $X_i$
denotes the orbit of $i$. Then $F=\langle x_{i_1},\dots
,x_{i_r}\rangle$ is a free submonoid of $S_{n,l}(H)$ with basis
$\{x_{i_1},\dots ,x_{i_r}\}$. Furthermore,  if $s\in S_{n,l}(H)$
 and $|s|\geq l-1$, then there exist $j_1,j_2,\ldots ,
j_{l-1}, k_1, k_2, \ldots , k_{l-1} \in\{ 1,2,\dots ,n\}$ and unique
$w_1,w_2\in F$ such that $s=w_1x_{j_1}x_{j_2} \cdots x_{j_{l-1}}
=x_{k_1} x_{k_2} \cdots x_{k_{l-1}} w_2$.
\end{lemma}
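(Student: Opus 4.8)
The plan is to prove Lemma~\ref{form} in two parts, exactly as the statement is organized. First I would show that $F=\langle x_{i_1},\dots,x_{i_r}\rangle$ is free on $\{x_{i_1},\dots,x_{i_r}\}$. The natural tool is a homomorphism: map $S_{n,l}(H)$ to a free monoid (or to $\mathbb{N}^r$, or to a free monoid on $r$ letters) by sending $x_j$ to the generator indexed by the orbit $X_{i_t}$ containing $j$. One must check this is well-defined on $S_{n,l}(H)$, i.e.\ that it respects the defining relations: applying $\sigma\in H$ to an index keeps it in the same orbit, so the relation $x_{i_1}\cdots x_{i_l}=x_{\sigma(i_1)}\cdots x_{\sigma(i_l)}$ maps to a trivial identity in the free monoid on the orbit-letters. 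Restricting this homomorphism to $F$ gives a bijection onto the free monoid on $r$ generators, which forces $F$ to be free of rank $r$.

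**The normal form on the right.** For the second claim, take $s\in S_{n,l}(H)$ with $|s|\ge l-1$ and pick any expression $s=x_{j_1}\cdots x_{j_m}$ with $m=|s|\ge l-1$. Reading from the left, I would argue that repeated application of the relations allows one to move a ``free'' prefix to the front: the defining relations with $\sigma\in H$ let one replace any length-$l$ factor $x_{a_1}\cdots x_{a_l}$ by $x_{\sigma(a_1)}\cdots x_{\sigma(a_l)}$, and in particular (choosing $\sigma$ appropriately, using that each $X_i$ is an orbit) one can rewrite the first $l-1$ letters so that $x_{j_1}$ becomes the chosen orbit representative $x_{i_t}$. Iterating, one pushes the last $l-1$ letters into a tail $x_{k_1}\cdots x_{k_{l-1}}$ that is untouched, while everything to its left is a word in the orbit representatives, giving $s=w_1\,x_{j_1}\cdots x_{j_{l-1}}$ with $w_1\in F$; the mirror-image argument (rewriting the first $l-1$ letters as a prefix $x_{j_1}\cdots x_{j_{l-1}}$ and collecting the rest into $w_2\in F$) gives the other decomposition. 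Actually it is cleanest to do only the right-hand version $s=x_{k_1}\cdots x_{k_{l-1}}w_2$ carefully and obtain $s=w_1x_{j_1}\cdots x_{j_{l-1}}$ symmetrically.

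**Uniqueness.** For uniqueness of $w_1$ and $w_2$, I would again use the orbit-homomorphism from the first part together with the length function. Given $s=w_1 x_{j_1}\cdots x_{j_{l-1}}=w_1'x_{j_1'}\cdots x_{j_{l-1}'}$, both $w_1,w_1'$ lie in the free monoid $F$ and have the same length $|s|-(l-1)$; projecting to the free monoid on orbit-letters and using freeness, the images of $w_1$ and $w_1'$ coincide, and since the projection is injective on $F$ we get $w_1=w_1'$. The same argument handles $w_2$.

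**Main obstacle.** The genuinely delicate point is the rewriting argument for existence: showing that one really can normalize the first (resp.\ last) $l-1$ letters into a word in the orbit representatives while leaving the complementary tail (resp.\ head) of length $l-1$ arbitrary. One has to be careful that applying a relation to a length-$l$ window only shifts indices within orbits and does not create uncontrolled interference with the part one has already normalized — essentially a careful induction on $|s|$, peeling off one orbit-representative generator at a time from the appropriate end and invoking Lemma~\ref{central} to know that powers $x_i^l=x_j^l$ for $j\in X_i$, so that the ``collected'' part genuinely lands in $F$. Everything else (well-definedness of the homomorphism, freeness, uniqueness via length plus injectivity) is routine once this normalization step is in place.
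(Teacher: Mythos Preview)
Your proposal is correct and follows essentially the same route as the paper: the orbit-homomorphism (equivalently, the observation that every defining relation preserves the orbit of each letter in each position) yields both the freeness of $F$ and the uniqueness of $w_1,w_2$, while existence is an induction on $|s|$ that peels off one orbit-representative at a time from the appropriate end. One small point: Lemma~\ref{central} is not needed---once the induction hypothesis gives $s=w_1x_{j_1}\cdots x_{j_{l-1}}x_{m_t}$ with $w_1\in F$, a single application of a defining relation to the final $l$ letters (with $\sigma$ chosen so that $\sigma(j_1)$ is the orbit representative) extends $w_1$ by one more letter of $F$, so no $l$-th power identity enters the argument.
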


\begin{proof}
By the defining relations of $S_{n,l}(H)$ it is clear that if
$x_{j_1}\cdots x_{j_t}=x_{k_1}\cdots x_{k_u}$ then $t=u$ and $k_p\in
X_{j_p}$, for $p\in \{ 1,2,\dots,t\}$. Therefore $F=\langle
x_{i_1},\dots ,x_{i_r}\rangle$ is a free submonoid of $S_{n,l}(H)$
with basis $\{x_{i_1},\dots ,x_{i_r}\}$.

Let $s=x_{m_1}\cdots x_{m_t}\in S_{n,l}(H)$ such that $t\geq l-1$.
We shall prove the second part of the result by induction on $t$. If
$t=l-1$, then $w_1=w_2=1$ and $s=w_1x_{m_1}\cdots
x_{m_{l-1}}=x_{m_1}\cdots x_{m_{l-1}}w_2$. Suppose that $t>l-1$. By
induction hypothesis there exist $j_1,j_2,\ldots , j_{l-1}, k_1,
k_2, \ldots , k_{l-1} \in\{ 1,2,\dots ,n\}$ and $w_1,w_2\in F$ such
that $s=w_1x_{j_1}x_{j_2}\cdots x_{j_{l-1}}x_{m_t} =x_{m_1} x_{k_1}
x_{k_2} \cdots x_{k_{l-1}} w_2$.  We may assume that $j_1\in
X_{i_p}$ and $k_{l-1}\in X_{i_q}$. Thus there exist $\sigma,\tau\in
H$ such that $\sigma(j_1)=i_p$ and $\tau(k_{l-1})=i_q$. Therefore
\begin{eqnarray*}s&=& w_1x_{j_1}x_{j_2} \cdots x_{j_{l-1}}x_{m_t}=w_1x_{\sigma (j_1)} x_{\sigma (j_2)}\cdots  x_{\sigma (j_{l-1})} x_{\sigma
(m_t)}\\
&=&w_1x_{i_p} x_{\sigma (j_2)}\cdots  x_{\sigma (j_{l-1})}
x_{\sigma (m_t)}\\
s &=& x_{m_1} x_{k_1} x_{k_2} \cdots x_{k_{l-1}} w_2 = x_{\tau(m_1)}
x_{\tau(k_1)} \cdots x_{\tau(k_{l-2})}x_{\tau(k_{l-1})}w_2\\
&=&x_{\tau(m_1)} x_{\tau(k_1)} \cdots x_{\tau(k_{l-2})}x_{i_q}w_2
\end{eqnarray*}
 and $w_1x_{i_p}, x_{i_q}w_2\in F$. The uniqueness of $w_1x_{i_p},
x_{i_q}w_2$ follows easily by the defining relations of
$S_{n,l}(H)$.
\end{proof}

By a sequence of rewrites of $p$ steps, we mean applying a sequence
$\tau_1,\tau_2,\dots, \tau_p \in H$ to the indices  of $l$
consecutive letters of a word $y_{j_1}\dots y_{j_t}$  in the free
monoid $\FM_n$ with basis $\{ y_1,y_2,\dots ,y_n\}$. For example if
$l=2$, if we have the word $w=y_1y_2y_5y_3y_1$ and $\gamma , \delta
, \epsilon , \zeta \in H$, then a possible sequence of rewrites of
$4$ steps would be
$$y_1y_2y_5y_3y_1,\; y_1y_{\gamma (2)}y_{\gamma (5)}y_3y_1,\;
y_1y_{\gamma (2)}y_{\gamma (5)}y_{\delta (3)}y_{\delta (1)}, \;
y_1y_{\gamma (2)}y_{ \epsilon\gamma (5)}y_{\epsilon\delta
(3)}y_{\delta (1)},\; y_1y_{\zeta \gamma (2)}y_{\zeta \epsilon\gamma
(5)}y_{\epsilon\delta (3)}y_{\delta (1)}.$$
Let $\pi\colon
\FM_n\longrightarrow S_{n,l}(H)$ be the unique homomorphism such
that $\pi(y_j)=x_j$, for all $j\in\{ 1,2,\dots, n\}$. Note that
$x_{j_1}x_{j_2}\cdots x_{j_t}=x_{k_1}x_{k_2}\cdots x_{k_t}$ if and
only if there exist a nonnegative integer $p$ and a sequence of
rewrites of $p$ steps $w_1,w_2,\dots, w_{p+1}$ such that
$w_1=y_{j_1}y_{j_2}\cdots y_{j_t}$ and $w_{p+1}=y_{k_1}y_{k_2}\cdots
y_{k_t}$. Furthermore, if $H$ is abelian  and $t\geq l$, then
$x_{j_1}x_{j_2}\cdots x_{j_t}=x_{k_1}x_{k_2}\cdots x_{k_t}$ if and
only if there exists a sequence of rewrites of $t-l+1$ steps of the
form \begin{eqnarray*}&&y_{j_1}y_{j_2}\cdots y_{j_t},\;
y_{\tau_1(j_1)}\cdots y_{\tau_1(j_{l})}y_{j_{l+1}}\cdots y_{j_t},\;
y_{\tau_1(j_1)}y_{\tau_2\tau_1(j_2)}\cdots
y_{\tau_2\tau_1(j_l)}y_{\tau_2(j_{l+1})}y_{j_{l+2}}\cdots
y_{j_{t}},\;\dots ,\\
&&y_{\tau_1(j_1)}\cdots y_{\tau_l\cdots\tau_2\tau_1(j_l)}
y_{\tau_{l+1}\cdots\tau_3\tau_2(j_{l+1})}\cdots
y_{\tau_{t-l+1}\cdots\tau_{t-2l+2}(j_{t-l+1})}y_{\tau_{t-l+1}\cdots\tau_{t-2l+3}(j_{t-l+2})}\cdots
y_{\tau_{t-l+1}\tau_{t-l}(j_{t-1})}y_{\tau_{t-l+1}(j_{t})},\end{eqnarray*}
such that the last word is equal to $y_{k_1}y_{k_2}\cdots y_{k_t}$,
for some $\tau_1,\dots, \tau_{t-l+1}\in H$.

Recall that a subgroup $H$ of $\Sym_n$ is said to be  semi-regular if the
stabilizer of $i$, $H_i=\{\sigma\in H\mid \sigma(i)=i\}$, is trivial
for all $i\in\{ 1,2, \dots, n\}$.

\begin{theorem}\label{cancellative}
Let $H$ be a subgroup of $\Sym_n$. Then $S_{n,l}(H)$ is cancellative
if and only if $H$ is semi-regular and abelian.
\end{theorem}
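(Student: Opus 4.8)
The plan is to prove the two implications separately, treating the easier direction first. For the "only if" direction, I would argue contrapositively: suppose $H$ is not semi-regular, so some stabilizer $H_i$ is nontrivial, with $\sigma(i)=i$ for some $\sigma\neq\id$, say $\sigma(j)\neq j$. Then I would exhibit a cancellation failure. Using a word of length $l$ starting and ending in the index $i$ (e.g. $x_i x_i\cdots x_i$, or a word whose first $l-1$ letters can be rewritten by $\sigma$ while leaving a chosen outer letter fixed), one produces distinct words $u\neq v$ in the free monoid with $x_i u = x_i v$ but $u\neq v$ in $S_{n,l}(H)$; concretely, pick indices so that $\sigma$ fixes the boundary letter but moves an interior one. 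Similarly, if $H$ is semi-regular but non-abelian, take $\sigma,\tau\in H$ with $\sigma\tau\neq\tau\sigma$; by semi-regularity the induced action is free on each orbit, and one can build two words of length $\geq 2l$ that become equal after appending a common letter but are not equal themselves — the commutator obstruction shows up precisely because overlapping rewrites on adjacent blocks of $l$ letters compose the permutations in opposite orders. In both cases the key is that Lemma \ref{form}'s uniqueness statement, or rather its failure, detects the defect.

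For the "if" direction — the substantial one — assume $H$ is semi-regular and abelian. I would use the rewriting description established just before the theorem: for $H$ abelian and words of length $t\geq l$, equality in $S_{n,l}(H)$ is witnessed by a normalized sequence of $t-l+1$ rewrites with permutations $\tau_1,\dots,\tau_{t-l+1}\in H$ applied to successive windows. The goal is to show: if $x_a x_{j_1}\cdots x_{j_t} = x_a x_{k_1}\cdots x_{k_t}$ in $S_{n,l}(H)$ (left cancellation; right is symmetric), then $x_{j_1}\cdots x_{j_t} = x_{k_1}\cdots x_{k_t}$. Write out the normalized rewrite sequence taking $y_a y_{j_1}\cdots y_{j_t}$ to $y_a y_{k_1}\cdots y_{k_t}$ with permutations $\tau_1,\dots,\tau_{t-l+2}$. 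The first permutation $\tau_1$ acts on the window $(a, j_1,\dots, j_{l-1})$ and must fix the index $a$ in its first slot; since $H$ is \emph{semi-regular}, $\tau_1$ fixing $a$ forces $\tau_1=\id$. This is the crux: semi-regularity collapses the first rewrite to the identity, so the prefix letter $y_a$ never actually changes and never interferes. Then, because $H$ is abelian, the remaining rewrites $\tau_2,\dots,\tau_{t-l+2}$ can be reorganized — shifting indices by one — into a valid normalized rewrite sequence of length $t-l+1$ acting on $y_{j_1}\cdots y_{j_t}$ and producing $y_{k_1}\cdots y_{k_t}$, giving the desired equality.

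The main obstacle, and the step requiring real care, is the bookkeeping in that last reorganization: one must check that after deleting the (now trivial) first rewrite, the composed permutations governing each remaining window still have the correct "staircase" form $\tau_{m}\tau_{m-1}\cdots$ demanded by the normalized rewrite description, and that no constraint is violated at the right boundary. Abelianness is exactly what makes this possible, since it lets one commute the accumulated permutations past one another freely when re-indexing the windows; without it the shifted sequence would not compose correctly. I would handle this by induction on $t$, peeling off one letter at a time, so that at each stage only a single window needs to be checked against the semi-regularity constraint, keeping the combinatorics manageable. A symmetric argument (applying $\pi$-equality read right-to-left, or invoking the mirror-image monoid $S_{n,l}(H)$ with the reversed presentation, which has the same form) then yields right cancellativity, completing the proof.
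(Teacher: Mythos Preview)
Your plan for the ``if'' direction is essentially the paper's argument: use the normalized left-to-right rewrite sequence (available because $H$ is abelian), observe that the permutations acting on windows that touch the prefix must fix the prefix letters, and invoke semi-regularity to force those permutations to be the identity. Two remarks. First, the paper cancels the entire prefix $a=x_{j_1}\cdots x_{j_r}$ in one stroke rather than letter by letter: from $\tau_1(j_1)=j_1$, $\tau_2\tau_1(j_2)=j_2$, \dots\ one gets $\tau_1=\tau_2=\cdots=\id$ successively, and then the tail is literally unchanged. Second, your worry about ``reorganizing'' the remaining rewrites is misplaced: once $\tau_1=\id$, the sequence $\tau_2,\dots$ applied to windows starting at position $2$ \emph{is} already a normalized rewrite sequence for the shorter word; no further use of commutativity is needed at that point (commutativity was spent in obtaining the normalized form).

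For the ``only if'' direction your contrapositive sketch is workable but less clean than the paper's direct argument, and the phrase ``length $\geq 2l$'' signals a potential difficulty. The paper assumes cancellativity and argues forward. For semi-regularity: if $\tau\in H_{j_1}$, then $x_{j_1}x_{j_2}\cdots x_{j_l}=x_{j_1}x_{\tau(j_2)}\cdots x_{\tau(j_l)}$, and cancelling $x_{j_1}$ leaves two words of length $l-1$, where equality in $S_{n,l}(H)$ is free-word equality; hence $\tau=\id$. For commutativity: take any $\mu,\nu\in H$ and a word of length $l+1$, namely $x_r x_{s_1}\cdots x_{s_{l-1}} x_t$; applying $\mu$ to the first $l$ letters then $\nu$ to the last $l$, versus $\nu$ then $\mu$, gives two expressions with the same outer letters $x_{\mu(r)}$ and $x_{\nu(t)}$; cancelling both leaves length-$(l-1)$ words $x_{\nu\mu(s_q)}$ versus $x_{\mu\nu(s_q)}$, again free words, forcing $\mu\nu=\nu\mu$. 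The point is that cancelling down to length $<l$ makes inequality in $S_{n,l}(H)$ trivial to verify. Your contrapositive route with long words would require showing that two words of length $\geq l$ are unequal in $S_{n,l}(H)$, which is exactly the nontrivial part you would rather avoid; the length-$(l+1)$ construction is what makes this easy.
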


\begin{proof} Suppose that $H$ is semi-regular and abelian.
Let $a,b,c\in S_{n,l}(H)$ be such that $ab=ac$. We know that $b$ and
$c$ have the same length, say $m$, so we can express the three
elements as $a=x_{j_1}x_{j_2}\cdots x_{j_r}$,
$b=x_{k_1}x_{k_2}\cdots x_{k_m}$ and $c=x_{k'_1}x_{k'_2}\cdots
x_{k'_m}$.  If $r+m<l$, then by the defining relations of
$S_{n,l}(H)$ it is clear that $b=c$. Hence we may assume that
$m+r\geq l$. Since $ab=ac$ and $H$ is abelian, we know that there
exist $\tau_1,\tau_2,\dots ,\tau_{r+m-l+1}\in H$ such that
$y_{j_1}y_{j_2}\cdots y_{j_r}y_{k'_{1}} \cdots
y_{k'_{m-1}}y_{k'_{m}} =$ $$
y_{\tau_1(j_1)}y_{\tau_2\tau_1(j_2)}\cdots
y_{\tau_{r'}\tau_{r'-1}\cdots\tau_{s'}(j_r)}
y_{\tau_{r''}\tau_{r''-1}\cdots\tau_{s''} (k_{1}) }\cdots
y_{\tau_{r+m-l+1}\tau_{r+m-l} (k_{m-1}) }y_{\tau_{r+m-l+1} (k_{m}) }
$$
in $\FM_n$, where
$$(r',s',r'',s'')=\left\{\begin{array}{ll}
(r,1,r+1,1)&\mbox{ if }r<l \mbox{ and
}r<r+m-l+1\\
(r,1,r,1)&\mbox{ if }r< l\mbox{ and
}r=r+m-l+1\\
(r+m-l+1,1,r+m-l+1,1)&\mbox{ if }r<l\mbox{ and }r>r+m-l+1\\
(r,r-l+1,r+1,r-l+2)&\mbox{ if }r\geq l \mbox{ and
}r<r+m-l+1\\
(r,r-l+1,r,r-l+2)&\mbox{ if }r\geq  l\mbox{ and
}r=r+m-l+1\\
(r+m-l+1,r-l+1,r+m-l+1,r-l+2)&\mbox{ if }r\geq l\mbox{ and
}r>r+m-l+1
\end{array}\right.$$
Therefore we have $\tau_1(j_1)=j_1, \tau_2\tau_1(j_2)=j_2, \ldots ,
\tau_{r'}\tau_{r'-1}\cdots\tau_{s'}(j_r)=j_r$. Since $H$ is
semi-regular, this implies that $\tau_1=\tau_2=\ldots =
\tau_{r'}=\id$, and therefore $x_{k_1}x_{k_2}\cdots x_{k_{m-1}}
x_{k_{m}} = x_{k'_1}x_{k'_2}\cdots x_{k'_{m-1}} x_{k'_{m}} $, so
$b=c$. Therefore $S_{n,l}(H)$ is left-cancellative and, by a similar
argument, it is right-cancellative.

Suppose that $S_{n,l}(H)$ is cancellative. Let $\tau\in H_{j_1}$.
Then $x_{j_1}x_{j_2}x_{j_3}\cdots
x_{j_l}=x_{\tau(j_1)}x_{\tau(j_2)}x_{\tau(j_3)}\cdots x_{\tau(j_l)}
=x_{j_1}x_{\tau(j_2)}x_{\tau(j_3)}\cdots x_{\tau(j_l)}$, and since
$S_{n,l}(H)$ is  cancellative,
 $x_{j_2}x_{j_3}\cdots x_{j_l} = x_{\tau(j_2)}x_{\tau(j_3)}\cdots x_{\tau(j_l)}$.
So,
$x_{j_q}=x_{\tau({j_q})}$, for all
${j_q}\in\{ 1,2,\dots ,n\}$. Hence $\tau=\id$ and therefore $H$ is
semi-regular.

Let $\mu , \nu \in H$. We have
\allowdisplaybreaks{
\begin{align*}
 x_rx_{s_1}x_{s_2}\cdots x_{s_{l-1}}x_t&=x_{\mu(r)} x_{\mu({s_1})}x_{\mu({s_2})}\cdots x_{\mu({s_{l-1}})}  x_t =
x_{\mu(r)} x_{\nu\mu({s_1})}x_{\nu\mu({s_2})}\cdots x_{\nu\mu({s_{l-1}})}  x_{\nu(t)} \\
x_rx_{s_1}x_{s_2}\cdots x_{s_{l-1}}x_t&=x_r x_{\nu({s_1})}x_{\nu({s_2})}\cdots x_{\nu({s_{l-1}})} x_{\nu(t)} =
x_{\mu(r)} x_{\mu \nu({s_1})}x_{\mu \nu({s_2})}\cdots x_{\mu \nu({s_{l-1}})} x_{\nu(t)}
\end{align*}}
Since $S_{n,l}(H)$ is cancellative,  $x_{\nu \mu({s_q})}= x_{\mu \nu
({s_q})}$, for all ${s_q} \in \{ 1,2,\dots ,n\}$. Therefore $ \nu \mu= \mu
\nu$ and the result follows.
\end{proof}

\begin{proposition}\label{group}
Let $H$ be a transitive abelian subgroup of $\Sym_n$. Then
$S_{n,l}(H)$ has a group of fractions $G=S_{n,l}(H)\langle
x_1^l\rangle^{-1}$. Furthermore
\begin{itemize}
\item[(i)] $T(G)=\{ g\in G\mid o(g)<\infty\}=\{x_1^{-l+1}x_{j_1}\cdots x_{j_{l-1}}\mid
j_i\in\{ 1,2,\dots ,n \}\}$ is a normal abelian subgroup of $G$, and
$T(G)\cong H^{l-1}$, the direct product of $l-1$ copies of $H$.
\item[(ii)] $G=T(G)\rtimes gr\langle x_1\rangle$
\item[(iii)] $gr\langle x_1^l\rangle$ is a central subgroup of
finite  index in $G$.
\end{itemize}
\end{proposition}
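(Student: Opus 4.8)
The plan is to realise $G$ as a central localization, to split off a copy of $\mathbb Z$ by means of the length function, and then to pin down what remains by comparing $G$ with an explicit semidirect product $\bigl(H^{l}/H^{\Delta}\bigr)\rtimes\mathbb Z$. \emph{First, the existence of $G$.} A transitive abelian subgroup of $\Sym_n$ is regular, hence semi-regular, so by Theorem~\ref{cancellative} the monoid $S:=S_{n,l}(H)$ is cancellative, and $x_1^{l}$ is central by Lemma~\ref{central}. Thus $\{x_1^{lk}:k\ge 0\}$ is a central (in particular Ore) set of regular elements, $S$ embeds in $G:=S\langle x_1^{l}\rangle^{-1}$, and since $x_jx_j^{\,l-1}=x_j^{\,l}=x_1^{\,l}$ (Lemma~\ref{central}) is an invertible central element of $G$, every generator $x_j$ is a unit of $G$; hence $G$ is a group, and by the universal properties of central localization and of the group of fractions it is the group of fractions of $S$.

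\emph{Next, the $\mathbb Z$-quotient and the shape of $N:=\ker d$.} The length function extends to an epimorphism $d\colon G\to\mathbb Z$ with $d(x_1)=1$, so $x_1$ has infinite order and $G=N\rtimes\gr\langle x_1\rangle$. Identify $\{1,\dots,n\}$ with $H$ via the regular action, so that $x_1=x_e$ and the defining relations become $x_{g_1}\cdots x_{g_l}=x_{tg_1}\cdots x_{tg_l}$ for $t,g_1,\dots,g_l\in H$. Given $g\in N$, write $g=x_1^{-lk}s$ with $s\in S$, $|s|=lk$; for $k\ge 1$ we have $|s|\ge l-1$, and since $H$ is transitive the free submonoid of Lemma~\ref{form} is $F=\langle x_1\rangle$, so that lemma yields $s=x_1^{\,l(k-1)+1}x_{j_1}\cdots x_{j_{l-1}}$, whence $g=x_1^{\,1-l}x_{j_1}\cdots x_{j_{l-1}}$ (and $g=1$ if $k=0$). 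As the reverse inclusion is clear, $N=\{x_1^{\,1-l}x_{j_1}\cdots x_{j_{l-1}}:j_i\in\{1,\dots,n\}\}$, the set named in~(i).

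\emph{Then, the identification of $N$.} Put $A:=H^{l}/H^{\Delta}$, with $H^{\Delta}$ the diagonal subgroup, let $\gr\langle z\rangle\cong\mathbb Z$ act on $A$ through the automorphism induced by a cyclic permutation of the $l$ coordinates of $H^{l}$, and define $\Phi\colon S\to A\rtimes\gr\langle z\rangle$ on generators by $\Phi(x_g)=\bigl(\overline{(g,e,\dots,e)},z\bigr)$, where $\overline{\,\cdot\,}$ is the class in $A$. A direct computation gives $\Phi(x_{g_1}\cdots x_{g_l})=\bigl(\overline{(g_1,\dots,g_l)},z^{l}\bigr)$, which is unchanged under $g_i\mapsto tg_i$ precisely because $(t,\dots,t)\in H^{\Delta}$; hence $\Phi$ annihilates the defining relations, and since $\Phi(x_1^{l})=(1_A,z^{l})$ is a unit it extends to $\Phi\colon G\to A\rtimes\gr\langle z\rangle$. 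Evaluating on the normal forms above, $\Phi\bigl(x_1^{\,1-l}x_{g_1}\cdots x_{g_{l-1}}\bigr)=\bigl(\overline{(e,g_1,\dots,g_{l-1})},z^{0}\bigr)$, and $(g_1,\dots,g_{l-1})\mapsto\overline{(e,g_1,\dots,g_{l-1})}$ is a bijection $H^{l-1}\to A$; so $\Phi$ is injective on $N$, hence injective, and surjective since $\Phi(N)$ is the whole copy of $A$ and $\Phi(x_1)=(1_A,z)$. Therefore $\Phi\colon G\xrightarrow{\ \sim\ }A\rtimes\gr\langle z\rangle$, and in particular $N\cong A\cong H^{l-1}$ is a finite abelian group of order $|H|^{l-1}$.

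\emph{Conclusion.} As $N$ is finite, $N\subseteq T(G)$; conversely any periodic element of $G$ has trivial $d$-image and so lies in $\ker d=N$, whence $T(G)=N$. This is the set in~(i), it is normal (being $\ker d$), abelian and isomorphic to $H^{l-1}$, which is~(i); item~(ii) is the splitting $G=N\rtimes\gr\langle x_1\rangle$; and for~(iii), $x_1^{l}$ is central by Lemma~\ref{central}, $\gr\langle x_1^{l}\rangle\cong\mathbb Z$, and $\Phi$ carries it onto $\{1_A\}\times\gr\langle z^{l}\rangle$, which has index $|A|\cdot l=l\,|H|^{l-1}$ in $A\rtimes\gr\langle z\rangle$, so $\gr\langle x_1^{l}\rangle$ is a central subgroup of finite index $l\,|H|^{l-1}$. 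The step I expect to be delicate is the identification of $N$: one must choose the target $A\rtimes\gr\langle z\rangle$ and the formula for $\Phi$ so that $\Phi$ simultaneously kills every permutation relation (this is exactly where commutativity of $H$ enters) and is visibly injective on the explicit normal forms coming from Lemma~\ref{form}; the rest is routine localization theory and coset counting.
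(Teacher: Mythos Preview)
Your proof is correct.  The overall architecture---localize at the central element $x_1^{l}$, split off $\mathbb Z$ via the length function, read off the degree-zero part $N$ using Lemma~\ref{form}, and identify $N$ with $H^{l-1}$---agrees with the paper's.  Where you diverge is in the identification of $N$.  The paper works \emph{inward}: it writes down the bijection $f\colon H^{l-1}\to T$, $f(\sigma_{j_1},\dots,\sigma_{j_{l-1}})=x_1^{-l+1}x_{j_1}\cdots x_{j_{l-1}}$ (where $\sigma_j\in H$ is the unique element with $\sigma_j(j)=1$), and then verifies by a direct rewriting computation inside $S$ that $f$ is multiplicative.  You work \emph{outward}: you construct the concrete group $(H^{l}/H^{\Delta})\rtimes\langle z\rangle$, check that the defining relations of $S$ hold there (which is immediate once the diagonal is killed), extend to $G$, and show bijectivity from the same normal forms.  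Your approach buys a little more: it exhibits an explicit model for all of $G$, not only for $T(G)$, and it yields the exact index $[G:\gr\langle x_1^{l}\rangle]=l\,|H|^{\,l-1}=l\,n^{\,l-1}$, whereas the paper only asserts finiteness.  The paper's approach, on the other hand, avoids introducing an auxiliary group and keeps every computation inside $S$.  One small point worth making explicit in your write-up: the inference ``$\Phi$ is injective on $N$, hence injective'' relies on the fact that the $z$-component of $\Phi(g)$ is $z^{d(g)}$, so that $\ker\Phi\subseteq\ker d=N$; this is immediate from the definition of $\Phi$ on generators, but deserves a clause.
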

\begin{proof} By \cite[Proposition~3.2]{permutation}, $H$ is
regular, i.e. $H$ is transitive and semi-regular.  By
Theorem~\ref{cancellative}, $S_{n,l}(H)$ is cancellative. By
Lemma~\ref{central}, $x_1^l$ is a central element in $S_{n,l}(H)$.
Since $x_1^l=x_i^l$, for all $i$, it is easy to see that the central
localization $S_{n,l}(H)\langle x_1^l\rangle^{-1}$ is a group of
fractions of $S_{n,l}(H)$, in fact $x_i^{-1}=x_i^{l-1}x_1^{-l}$.
Thus $(iii)$ follows easily by Lemma~\ref{form}. Note that the group
$G=S_{n,l}(H)\langle x_1^l\rangle^{-1}$ is naturally
$\mathbb{Z}$-graded, where the degree of
$x_{i_1}^{\varepsilon_1}x_{i_2}^{\varepsilon_2}\cdots
x_{i_k}^{\varepsilon_k}$ is $\sum_{j=1}^k\varepsilon_j$. Hence the
elements of finite order of $G$ have degree zero. By
Lemma~\ref{form}, the set of elements of degree zero is
$$T=\{x_1^{-l+1}x_{j_1}\cdots x_{j_{l-1}}\mid
j_1,\dots ,j_{l-1}\in\{ 1,2,\dots ,n \}\}.$$ Clearly this set is a
normal subgroup of $G$. Since $H$ is regular, for every $i\in \{
1,2,\dots, n\}$, there exists a unique $\sigma_i\in H$ such that
$\sigma_i(i)=1$. Let $f\colon H^{l-1}\longrightarrow T$ be the
bijective function defined by $f(\sigma_{j_1},\dots
,\sigma_{j_{l-1}})=x_1^{-l+1}x_{j_1}\cdots x_{j_{l-1}}$. Note that
\begin{eqnarray*}
x_1^{-l+1}x_{j_1}\cdots x_{j_{l-1}}x_1^{-l+1}x_{k_1}\cdots
x_{k_{l-1}}&=&x_1^{-2l}x_1x_{j_1}\cdots x_{j_{l-1}}x_1x_{k_1}\cdots
x_{k_{l-1}}\\
&=& x_1^{-2l}x_1^2x_{\sigma_{j_1}(j_2)}\cdots x_{\sigma_{j_1}(j_{l-1})}x_{\sigma_{j_1}(1)}x_{k_1}\cdots x_{k_{l-1}}\\
&=& x_1^{-2l}x_1^3x_{\sigma_{j_2}(j_3)}\cdots x_{\sigma_{j_2}(j_{l-1})}x_{\sigma_{j_2}(1)}x_{\sigma_{j_2}\sigma_{j_1}^{-1}(k_1)}x_{k_2}\cdots x_{k_{l-1}}\\
&=& \ldots =
x_1^{-2l}x_1^lx_{\sigma_{j_{l-1}}(1)}x_{\sigma_{j_{l-1}}\sigma_{j_1}^{-1}(k_1)}x_{\sigma_{j_{l-1}}\sigma_{j_2}^{-1}(k_2)}\cdots
x_{\sigma_{j_{l-1}}\sigma_{j_{l-2}}^{-1}(k_{l-2})}x_{k_{l-1}}\\
&=&x_1^{-2l}x_1^{l+1}x_{\sigma_{j_1}^{-1}(k_1)}x_{\sigma_{j_2}^{-1}(k_2)}\cdots
x_{\sigma_{j_{l-2}}^{-1}(k_{l-2})}x_{\sigma_{j_{l-1}}^{-1}(k_{l-1})}
\end{eqnarray*}
Since $\sigma_{\sigma_j^{-1}(k)}(\sigma_j^{-1}(k))=1=\sigma_k(k)$,
and $H_k$ is trivial, we have that
$\sigma_{\sigma_j^{-1}(k)}\sigma_j^{-1}=\sigma_k$. Because $H$ is
abelian, $\sigma_{\sigma_j^{-1}(k)}=\sigma_j\sigma_k$. Therefore
$f((\sigma_{j_1},\dots ,\sigma_{j_{l-1}})(\sigma_{k_1},\dots
,\sigma_{k_{l-1}}))=f(\sigma_{j_1},\dots
,\sigma_{j_{l-1}})f(\sigma_{k_1},\dots ,\sigma_{k_{l-1}})$ and thus  $(i)$ follows. By Lemma~\ref{form} and since $T(G)\cap
gr\langle x_1\rangle=\{ 1\}$, $(ii)$ follows easily.
\end{proof}

\begin{theorem}\label{universalgroupThree}
Let $H$ be an abelian semi-regular subgroup of $\Sym_n$. Then
$S_{n,l}(H)$ is a submonoid of its universal group
$$G=\gr(x_1,\dots, x_n\mid
x_{i_1}x_{i_2}\cdots x_{i_l}=x_{\sigma(i_1)}x_{\sigma(i_2)}\cdots
x_{\sigma(i_l)}, \mbox{ for all }i_1,i_2,\dots ,i_l \in\{1,2,\dots
,n\} \mbox{ and }\sigma\in H).$$
\end{theorem}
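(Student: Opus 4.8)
The statement claims that for an abelian semi-regular (not necessarily transitive) $H$, the monoid $S_{n,l}(H)$ embeds into its universal group. The plan is to reduce to the transitive case already handled in Proposition \ref{group} by splitting along orbits. Let $X_{i_1},\dots,X_{i_r}$ be the orbits of $H$ on $\{1,\dots,n\}$, and for each orbit $X_{i_a}$ let $H^{(a)}$ denote the image of $H$ under the restriction homomorphism to $\Sym(X_{i_a})$; since $H$ is abelian and semi-regular, each $H^{(a)}$ is a transitive, abelian, semi-regular (hence regular) subgroup of the symmetric group on $|X_{i_a}|$ points. The first step is to observe that the defining relations of $S_{n,l}(H)$ involving only generators from a single orbit $X_{i_a}$ are exactly the defining relations of $S_{|X_{i_a}|,l}(H^{(a)})$, and that relations mixing letters from different orbits force $t=u$ and $k_p\in X_{j_p}$ (this is the opening observation in the proof of Lemma \ref{form}). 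So every relation is ``orbit-wise'' in a precise sense.

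Next I would make this precise by building an explicit homomorphism from $S_{n,l}(H)$ into a group in which it visibly embeds. Concretely, set $G_a$ to be the universal group of $S_{|X_{i_a}|,l}(H^{(a)})$, which by Proposition \ref{group} (applied to the transitive case) equals the central localization $S_{|X_{i_a}|,l}(H^{(a)})\langle x^l\rangle^{-1}$ and into which $S_{|X_{i_a}|,l}(H^{(a)})$ embeds as a submonoid. The key structural point, coming again from the orbit-wise nature of the relations together with the free-product-like behaviour across orbits, is that $S_{n,l}(H)$ should be expressible as an amalgam of the $S_{|X_{i_a}|,l}(H^{(a)})$ over the free monoid $F=\langle x_{i_1},\dots,x_{i_r}\rangle$ of Lemma \ref{form}: each $x_{i_a}^l$ is central in its factor, all the $x_{i_a}^l$ are identified along $F$'s structure (they need not be equal across orbits), and a normal-form / rewriting argument using the sequence-of-rewrites description given before Theorem \ref{cancellative} shows that any equality of words in $S_{n,l}(H)$ can be achieved by rewrites each of which stays within a single orbit block. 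This gives a normal form for elements of $S_{n,l}(H)$ and exhibits $S_{n,l}(H)$ as a submonoid of the corresponding amalgamated product $G$ of the groups $G_a$, which is then the universal group.

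An alternative, and probably cleaner, route is to avoid amalgams entirely: since $S_{n,l}(H)$ is cancellative by Theorem \ref{cancellative}, it suffices by Ore's theorem to check that $S_{n,l}(H)$ satisfies the Ore condition, i.e. that $S_{n,l}(H)$ is left and right reversible. For this one uses Lemma \ref{central}: $x_{i_a}^l$ is central in the submonoid generated by the orbit $X_{i_a}$, and given $s,t\in S_{n,l}(H)$ one can multiply each on appropriate sides by a suitable power of a single $x_k^l$ (for instance a high power of the product $\prod_a x_{i_a}^l$, or just $x_1^l$ if one first reduces lengths as in Lemma \ref{form}) to bring them into a common overlap; the length function $|\cdot|$ controls the bookkeeping. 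Then $G=S_{n,l}(H)(S_{n,l}(H))^{-1}$ is a group of fractions, and a standard universal-property argument (any monoid homomorphism from $S_{n,l}(H)$ to a group factors through this localization because the elements being inverted are central, or at least Ore) identifies it with the universal group.

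The main obstacle I anticipate is the cross-orbit bookkeeping: unlike in Proposition \ref{group}, the central elements $x_{i_a}^l$ for different orbits are \emph{not} equal, so there is no single central element whose inversion produces the whole group, and one must be careful to show that localizing at the multiplicatively closed central set $\langle x_{i_1}^l,\dots,x_{i_r}^l\rangle$ (or verifying the Ore condition directly) still yields a group and that the natural map $S_{n,l}(H)\to G$ is injective. Establishing injectivity — equivalently, that distinct normal forms remain distinct after localization — is where the rewriting analysis from the paragraph preceding Theorem \ref{cancellative}, combined with the uniqueness clause of Lemma \ref{form}, does the real work; everything else is routine verification of the universal property.
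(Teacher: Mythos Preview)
Both routes you sketch have genuine gaps, and neither can be repaired without essentially abandoning the plan.

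For the Ore/localization approach: when $H$ is non-transitive the monoid $S_{n,l}(H)$ is \emph{not} Ore. The opening observation in the proof of Lemma~\ref{form} says that any equality $x_{j_1}\cdots x_{j_t}=x_{k_1}\cdots x_{k_t}$ forces $k_p\in X_{j_p}$ for every $p$; in particular the orbit of the first letter is an invariant of the element. Hence for representatives $x_{i_1},x_{i_2}$ of two different orbits one has $x_{i_1}S\cap x_{i_2}S=\emptyset$, so the right Ore condition fails outright. The same orbit-pattern invariant shows that $x_{i_a}^{l}$ is \emph{not} central in $S_{n,l}(H)$: Lemma~\ref{central} only gives $x_i^{l}x_j=x_jx_i^{l}$ for $j$ in the orbit of $i$, and if $j$ lies in a different orbit then $x_{i}^{l}x_j$ and $x_jx_i^{l}$ have different orbit patterns and are therefore distinct. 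So there is no central multiplicative set to invert, and no Ore localization to form. This is consistent with Theorem~\ref{T3}: $K[S_{n,l}(H)]$ has exponential growth in the non-transitive case.

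For the amalgam approach: the relations are \emph{not} orbit-wise. A single defining relation $x_{j_1}\cdots x_{j_l}=x_{\sigma(j_1)}\cdots x_{\sigma(j_l)}$ may involve generators from several different orbits simultaneously (e.g.\ with $H=\langle(1\,2)(3\,4)\rangle$ and $l=2$ one has $x_1x_3=x_2x_4$), and such a relation is not a consequence of the relations internal to the individual monoids $S_{|X_{i_a}|,l}(H^{(a)})$. So $S_{n,l}(H)$ is not an amalgam of the orbit pieces over $F$, and the claim that rewrites ``stay within a single orbit block'' is false.

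The paper takes a completely different route: it constructs an explicit faithful action of $S_{n,l}(H)$ by permutations on the set $\mathcal{C}=F\times \{x_{\sigma_1(i_1)}\cdots x_{\sigma_{l-1}(i_1)}:\sigma_j\in H\}$, where $F$ is the free \emph{group} on the orbit representatives $x_{i_1},\dots,x_{i_r}$. Each generator $x_{\nu(i_j)}$ acts by a bijection $f_{\nu,j}$ which left-multiplies the $F$-coordinate by $x_{i_j}$ and twists the second coordinate by $\nu^{\pm 1}$ in a position depending on $\deg(w)\bmod l$. One checks by hand that these bijections satisfy the defining relations, obtaining a monoid homomorphism $\varphi\colon S_{n,l}(H)\to\Sym(\mathcal{C})$, and then uses the normal form from Lemma~\ref{form} together with cancellativity to show $\varphi$ is injective. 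Since $S_{n,l}(H)$ thus embeds in a group, it embeds in its universal group.
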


\begin{proof}
Assume that the different orbits under the action of $H$ are
$X_{i_1},\dots,X_{i_r}$, where $X_i$ denotes the orbit of $i$.
Consider the free group $F$ on $\{ x_{i_1},x_{i_2},\dots
,x_{i_r}\}$. We know that every element $w\in F$ can be uniquely
written in the form
\begin{eqnarray}\label{rrreduced}
&&w=x_{i_{j_1}}^{\varepsilon_1}x_{i_{j_2}}^{\varepsilon_2}\cdots
x_{i_{j_m}}^{\varepsilon_m},
\end{eqnarray}
for some nonnegative integer $m$, elements $j_1,\dots, j_m\in \{
1,2,\dots ,r\}$ and $\varepsilon_1,\dots,\varepsilon_m\in \{
-1,1\}$, such that if $m>1$ and $j_t=j_{t+1}$, for some $1\leq t<m$,
then $\varepsilon_t=\varepsilon_{t+1}$. We say that the expression
on the right hand side of (\ref{rrreduced}) is the normal form of $w$
(when $m=0$ then  (\ref{rrreduced}) is $w=1$). We say that the degree of
$w$ (as in (\ref{rrreduced})) is
$\deg(w)=\sum_{i=1}^m\varepsilon_i$.

Let $\mathcal{C}=\{ (w,x_{\sigma_1 (i_{1})}\cdots x_{\sigma_{l-1}
(i_{1})})\mid w\in F,\,\sigma_j \in H$ for $j=1,\dots ,l-1\}$. For
$j\in\{ 1,2,\dots ,r\}$ and $\tau\in H$, we define $f_{\tau,
j}\colon \mathcal{C}\longrightarrow \mathcal{C}$ by
$$f_{\tau, j}(w,x_{\sigma_1 (i_{1})}\cdots x_{\sigma_{l-1}
(i_{1})})=\left\{\begin{array}{l}
(x_{i_j}w,x_{\tau^{-1}\sigma_1(i_1)} x_{\tau^{-1}\sigma_2(i_1)}
\cdots x_{\tau^{-1}\sigma_{l-1} (i_{1})})\quad \mbox{ if }
\deg(w)\equiv
0\, (\mathrm{mod}(l))\\
(x_{i_j}w,x_{\sigma_1(i_1)}\cdots
x_{\sigma_{k-1}(i_1)}x_{\tau\sigma_{k} (i_{1})}x_{\sigma_{k+1}
(i_{1})}\cdots x_{\sigma_{l-1} (i_{1})})\vspace{5pt}\\
\hphantom{xxxxxxxxxxxxxxxxxxxxxxxxxxxxixxx} \mbox{ if }
\deg(w)\equiv k\, (\mathrm{mod}(l))\end{array}\right.$$ for all
$(w,x_{\sigma_1 (i_{1})}\cdots x_{\sigma_{l-1} (i_{1})})\in
\mathcal{C}$. It is clear that $f_{\tau ,j}$ is bijective, in fact
$$f_{\tau, j}^{-1}(w,x_{\sigma_1 (i_{1})}\cdots x_{\sigma_{l-1}
(i_{1})})=\left\{\begin{array}{l}
(x_{i_j}^{-1}w,x_{\tau\sigma_1(i_1)} x_{\tau\sigma_2(i_1)} \cdots
x_{\tau\sigma_{l-1} (i_{1})})\quad \mbox{ if } \deg(w)\equiv
1\, (\mathrm{mod}(l))\\
(x_{i_j}^{-1}w,x_{\sigma_1(i_1)}\cdots
x_{\sigma_{k-1}(i_1)}x_{\tau^{-1}\sigma_{k} (i_{1})}x_{\sigma_{k+1}
(i_{1})}\cdots x_{\sigma_{l-1} (i_{1})})\vspace{5pt}\\
\hphantom{xxxxxxxxxxxxxxxxxxxxxxxixxxx} \mbox{ if } \deg(w)\equiv
k+1\, (\mathrm{mod}(l))\end{array}\right.$$ for all $(w,x_{\sigma_1
(i_{1})}\cdots x_{\sigma_{l-1} (i_{1})})\in \mathcal{C}$.

Let $\nu_1,\dots ,\nu_l,\tau\in H$ and $j_1,\dots, j_l\in \{
1,2,\dots ,r\}$. We shall prove that
$$f_{\nu_1,j_1} \circ \cdots \circ f_{\nu_l,j_l}=
f_{\tau\nu_1,j_1} \circ \cdots \circ f_{\tau\nu_l,j_l}.$$ For
$1<k< l$ we have that $(f_{\nu_1,j_1} \circ \cdots \circ
f_{\nu_l,j_l})(w,x_{\sigma_1 (i_{1})}\cdots x_{\sigma_{l-1}
(i_{1})})$
$$=(x_{i_{j_1}}\cdots
x_{i_{j_l}}w,x_{\nu_k^{-1}\nu_{k-1}\sigma_1 (i_{1})}\cdots
x_{\nu_k^{-1}\nu_{1}\sigma_{k-1}
(i_{1})}x_{\nu_k^{-1}\nu_{l}\sigma_k
(i_{1})}x_{\nu_k^{-1}\nu_{l-1}\sigma_{k+1} (i_{1})}\cdots
x_{\nu_k^{-1}\nu_{k+1}\sigma_{l-1} (i_{1})})$$ if $\deg(w)\equiv k
\, (\mathrm{mod}(l))$. We also have that $(f_{\tau\nu_1,j_1} \circ
\cdots \circ f_{\tau\nu_l,j_l})(w,x_{\sigma_1 (i_{1})}\cdots
x_{\sigma_{l-1} (i_{1})})$
$$=(x_{i_{j_1}}\cdots
x_{i_{j_l}}w,x_{\nu_k^{-1}\nu_{k-1}\sigma_1 (i_{1})}\cdots
x_{\nu_k^{-1}\nu_{1}\sigma_{k-1}
(i_{1})}x_{\nu_k^{-1}\nu_{l}\sigma_k
(i_{1})}x_{\nu_k^{-1}\nu_{l-1}\sigma_{k+1} (i_{1})}\cdots
x_{\nu_k^{-1}\nu_{k+1}\sigma_{l-1} (i_{1})})$$ if $\deg(w)\equiv k
\, (\mathrm{mod}(l))$.

For $k=1$ we have that $(f_{\nu_1,j_1} \circ \cdots \circ
f_{\nu_l,j_l})(w,x_{\sigma_1 (i_{1})}\cdots x_{\sigma_{l-1}
(i_{1})})$
$$=(x_{i_{j_1}}\cdots
x_{i_{j_l}}w,x_{\nu_1^{-1}\nu_{l}\sigma_1
(i_{1})}x_{\nu_1^{-1}\nu_{l-1}\sigma_{2} (i_{1})}\cdots
x_{\nu_1^{-1}\nu_{2}\sigma_{l-1} (i_{1})})$$ if $\deg(w)\equiv 1 \,
(\mathrm{mod}(l))$, and $(f_{\tau\nu_1,j_1} \circ \cdots \circ
f_{\tau\nu_l,j_l})(w,x_{\sigma_1 (i_{1})}\cdots x_{\sigma_{l-1}
(i_{1})})$
$$=(x_{i_{j_1}}\cdots
x_{i_{j_l}}w,x_{\nu_1^{-1}\nu_{l}\sigma_1
(i_{1})}x_{\nu_1^{-1}\nu_{l-1}\sigma_{2} (i_{1})}\cdots
x_{\nu_1^{-1}\nu_{2}\sigma_{l-1} (i_{1})})$$ if $\deg(w)\equiv 1 \,
(\mathrm{mod}(l))$.

 For $k=l$ we have that $(f_{\nu_1,j_1} \circ \cdots \circ
f_{\nu_l,j_l})(w,x_{\sigma_1 (i_{1})}\cdots x_{\sigma_{l-1}
(i_{1})})$
$$=(x_{i_{j_1}}\cdots
x_{i_{j_l}}w,x_{\nu_l^{-1}\nu_{l-1}\sigma_1 (i_{1})}\cdots
x_{\nu_l^{-1}\nu_{1}\sigma_{l-1} (i_{1})})$$ if $\deg(w)\equiv l \,
(\mathrm{mod}(l))$, and $(f_{\tau\nu_1,j_1} \circ \cdots \circ
f_{\tau\nu_l,j_l})(w,x_{\sigma_1 (i_{1})}\cdots x_{\sigma_{l-1}
(i_{1})})$
$$=(x_{i_{j_1}}\cdots
x_{i_{j_l}}w,x_{\nu_l^{-1}\nu_{l-1}\sigma_1 (i_{1})}\cdots
x_{\nu_l^{-1}\nu_{1}\sigma_{l-1} (i_{1})})$$ if $\deg(w)\equiv l \,
(\mathrm{mod}(l))$. Hence
$$f_{\nu_1,j_1} \circ \cdots \circ f_{\nu_l,j_l}=
f_{\tau\nu_1,j_1} \circ \cdots \circ f_{\tau\nu_l,j_l}.$$

Therefore there exists a unique homomorphism $\varphi\colon
S_{n,l}(H)\longrightarrow \Sym(\mathcal{C})$ such that
$\varphi(x_{\nu(i_j)})=f_{\nu, j}$, for all $\nu\in H$ and $j\in\{
1,2,\dots, r\}$.  It remains to show that  $\varphi$ is injective.
Indeed, as  $S_{n,l}(H)$ is a submonoid of a group it then follows
that  it is a submononoid of its universal group. So, assume $a,b\in
S_{n,l}(H)$ are such that $\varphi(a)=\varphi(b)$. Without loss of
generality, we may  suppose  that $a\neq 1$. Then, by
Lemma~\ref{form}, either $|a|<l-1$ or there exists a unique $w\in
\widetilde{F}$, where $\widetilde{F}=\langle
x_{i_1},\dots,x_{i_r}\rangle$ is a free submonoid on $\{
x_{i_1},\dots,x_{i_r}\}$ of $S_{n,l}(H)$, and there exist $k_1, k_2,
\ldots ,k_{l-1} \in \{ 1,2,\dots, n\}$ such that
$a=wx_{k_1}x_{k_2}\cdots x_{k_{l-1}}$. Suppose first that $|a|\geq
l-1$. By Theorem~\ref{cancellative}, the sequence $k_1, k_2, \ldots
,k_{l-1}$ is also unique. Since $H$ is semi-regular there exist
$\mu_1,\mu_2, \ldots , \mu_{l-1} \in H$ and $j_1 , j_2, \ldots
,j_{l-1} \in\{ 1,2,\dots ,r\}$ such that $k_1 =\mu_1(i_{j_1}) , k_2=
\mu_2(i_{j_2}) , \ldots , k_{l-1}  = \mu_{l-1}(i_{j_{l-1}})$, and
the sequences $\mu_1,\mu_2, \ldots , \mu_{l-1}$ and $j_1 , j_2,
\ldots ,j_{l-1}$ are unique. Suppose that
$$w=x_{i_{s_1}}\cdots x_{i_{s_q}},$$
for some $s_1,\dots ,s_q\in\{ 1,2,\dots ,r\}$. Then
$\varphi(a)=f_{\id,s_1}\circ\cdots\circ f_{\id,s_q} \circ
f_{\mu_1,j_1}\circ  \cdots \circ f_{\mu_{l-1},j_{l-1}} $. Hence
$\varphi(a)(x_{i_1},x_{i_1}^{l-1})=(wx_{i_{j_1}}\cdots
x_{i_{j_{l-1}}}x_{i_1}, x_{\mu_{l-1}(i_1)}\cdots x_{\mu_{1}(i_1)})$.
Therefore $b\neq 1$ and since
$\varphi(b)(x_{i_1},x_{i_1}^{l-1})=\varphi(a)(x_{i_1},x_{i_1}^{l-1})$,
it is clear that $b=wx_{\mu_1(i_{j_1})} \cdots
x_{\mu_{l-1}(i_{j_{l-1}}) }=a$.

Suppose that $|a|<l-1$. In this case, there exist unique $k_1,\dots,
k_p\in \{ 1,2,\dots ,n\}$ such that $a=x_{k_1}x_{k_2}\cdots
x_{k_p}$. Since $H$ is semi-regular there exist unique $\mu_1,\mu_2,
\ldots , \mu_{p} \in H$ and $j_1 , j_2, \ldots ,j_{p} \in\{
1,2,\dots ,r\}$ such that $k_1 =\mu_1(i_{j_1}) , k_2= \mu_2(i_{j_2})
, \ldots , k_{p}  = \mu_{p}(i_{j_{p}})$. Then
$\varphi(a)=f_{\mu_1,j_1}\circ \cdots \circ f_{\mu_{p},j_{p}} $.
Hence $\varphi(a)(x_{i_1},x_{i_1}^{l-1})=(x_{i_{j_1}}\cdots
x_{i_{j_{p}}}x_{i_1}, x_{\mu_{p}(i_1)}\cdots
x_{\mu_{1}(i_1)}x_{i_1}^{l-1-p})$. Therefore $b\neq 1$ and since
$\varphi(b)(x_{i_1},x_{i_1}^{l-1})=\varphi(a)(x_{i_1},x_{i_1}^{l-1})$,
it is clear that $b=x_{\mu_1(i_{j_1})} \cdots x_{\mu_{p}(i_{j_{p}})
}=a$. Thus indeed $\varphi$ is injective, and the result follows.
\end{proof}

\section{The algebra $K[S_{n,l}(H)]$}
Let $H$ be a subgroup of $\Sym_n$.  For a field $K$, we will study
the algebraic structure of the semigroup algebra $K[S_{n,l}(H)]$.

\begin{theorem}\label{T1}
Let $H$ be a transitive abelian subgroup of $\Sym_n$. Let $G$ be the
group of fractions of $S=S_{n,l}(H)$. Then $J(K[G])\subseteq
J(K[T(G)])K[G]$, in particular $J(K[G])$ is a nilpotent ideal.
Furthermore if $\ch (K) = 0$ or $\ch (K) = p$ and $p \nmid |H| $,
then $J(K[S])=0$ and $J(K[G])=0$.
\end{theorem}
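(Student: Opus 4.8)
The plan is to exploit the structure theorem for $G$ provided by Proposition~\ref{group}: $G = T(G) \rtimes \gr\langle x_1\rangle$ with $T=T(G)$ a finite abelian normal subgroup, $\gr\langle x_1^l\rangle$ central of finite index, and $G/T \cong \mathbb{Z}$. So $K[G]$ is a crossed product $K[T] * \mathbb{Z}$, equivalently a skew Laurent polynomial ring $K[T][t,t^{-1};\alpha]$ where $\alpha$ is the automorphism of $K[T]$ induced by conjugation by $x_1$ (note $t^l = x_1^l$ is central, so $\alpha$ has finite order $l$ on $K[T]$, or at least $\alpha^l$ is inner and trivial here since $x_1^l$ is central). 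The first step is to show $J(K[G]) \subseteq J(K[T])K[G]$. The standard tool is: for a finite normal subgroup $T \trianglelefteq G$, $J(K[G]) \cap K[T] \subseteq J(K[T])$ is false in general, but one uses that $K[G]$ is free as a left and right $K[T]$-module with a basis of units $\{t^i\}_{i\in\mathbb{Z}}$ normalizing $K[T]$; then $I := J(K[T])K[G] = K[G]J(K[T])$ is a two-sided ideal, and $K[G]/I \cong (K[T]/J(K[T])) * \mathbb{Z}$ is a crossed product of a semisimple artinian ring by $\mathbb{Z}$, hence (by a theorem on crossed products of semiprime Goldie rings / by the Noetherian skew Laurent case) semiprimitive. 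Therefore $J(K[G]) \subseteq I$. I would cite the relevant crossed-product result (e.g. from Passman's work on crossed products, or note that a skew Laurent extension $R[t,t^{-1};\alpha]$ over a semisimple artinian $R$ is semiprimitive because $J$ of such a ring is nilpotent and $t$-invariant, forcing it into $R$, then into $J(R)=0$). Since $T$ is finite, $J(K[T])$ is nilpotent, so $I$ is nilpotent, giving the ``in particular'' claim.

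For the second assertion, assume $\ch K = 0$ or $\ch K = p \nmid |H|$. By Proposition~\ref{group}(i), $T(G) \cong H^{l-1}$, so $|T(G)| = |H|^{l-1}$, and $p \nmid |H|$ gives $p \nmid |T(G)|$ (and in characteristic $0$ there is nothing to worry about); hence by Maschke's theorem $K[T(G)]$ is semisimple artinian, i.e.\ $J(K[T(G)]) = 0$. Combining with the first part, $J(K[G]) = 0$. It remains to deduce $J(K[S]) = 0$ where $S = S_{n,l}(H)$. Here I would use that $S \subseteq G$ and that $G$ is the central localization $S\langle x_1^l\rangle^{-1}$ (Proposition~\ref{group}), with $x_1^l$ central in $K[S]$. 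A central localization does not increase the Jacobson radical in a way that matters here: more precisely, if $J(K[S]) \ne 0$ pick $0 \ne a \in J(K[S])$; then I want its image in the localization $K[G] = K[S]\langle x_1^l\rangle^{-1}$ to lie in $J(K[G]) = 0$, which would force $a = 0$ since localization at the central regular element $x_1^l$ is injective on $K[S]$ (as $S$ is cancellative, $x_1^l$ is a non-zero-divisor in $K[S]$). To justify $J(K[S])$ localizing into $J(K[G])$: $J(K[S]) \cdot K[G] \subseteq J(K[G])$ because $K[G] = K[S][(x_1^l)^{-1}]$ is a central localization, and it is standard that for a central multiplicative set $C$, $J(R)C^{-1}R \subseteq J(C^{-1}R)$ (every element of $J(R)$ becomes quasi-regular, and quasi-regularity is preserved). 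Hence $a \in J(K[S]) \subseteq J(K[G]) \cap K[S] = 0$.

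The main obstacle I expect is the first step, establishing $J(K[G]) \subseteq J(K[T(G)])K[G]$, i.e.\ that the skew Laurent ring over the (possibly non-semisimple) commutative artinian ring $K[T(G)]$ has its Jacobson radical controlled by $J$ of the base. The clean argument is: set $R = K[T(G)]$, $N = J(R)$ (a nilpotent ideal, $T(G)$ finite), $\alpha$ the automorphism of $R$ from conjugation by $x_1$; note $\alpha(N) = N$ since $N$ is characteristic. Then $N K[G] = K[G] N$ is a two-sided nilpotent ideal of $K[G]$, and $K[G]/NK[G] \cong (R/N)[t,t^{-1};\bar\alpha]$. Since $R/N$ is a finite product of fields (commutative semisimple artinian), $\bar\alpha$ permutes the primitive idempotents and $\bar\alpha^{l}$ is the identity (as $t^l$ is central), so this skew Laurent ring is a finite module over its center $Z \supseteq (R/N)^{\bar\alpha}[t^{l}, t^{-l}]$, which is a finite product of Laurent polynomial rings over fields, hence Jacobson semisimple; therefore $K[G]/NK[G]$, being a finite module over a commutative Jacobson ring, is itself semiprimitive (its Jacobson radical, being a nil ideal in a ring integral over a Jacobson semisimple center, is zero — here one can invoke that $K[G]$ is a Noetherian PI ring, as follows from later results, so $J$ of any homomorphic image is nil and $N$-torsion arguments apply). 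Thus $J(K[G]) \subseteq NK[G] = J(R)K[G]$, and the nilpotency of $N$ gives the nilpotency of $J(K[G])$. If one prefers to avoid citing the PI/Noetherian facts that appear only later, the skew-Laurent computation above with the explicit central subring is self-contained.
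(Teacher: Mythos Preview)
Your treatment of $J(K[G])\subseteq J(K[T(G)])K[G]$ is correct; you are essentially reproving, in this special case, the result the paper simply cites as \cite[Theorem~7.3.1]{passman}, exploiting the decomposition $G=T(G)\rtimes\gr\langle x_1\rangle$ from Proposition~\ref{group}. The deduction of $J(K[G])=0$ via Maschke's theorem is the same as in the paper.

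The genuine gap is in your argument for $J(K[S])=0$. The claim that for a central multiplicative set $C$ one always has $J(R)\,C^{-1}R\subseteq J(C^{-1}R)$ is \emph{false}. Take $R=\mathbb{Z}_{(p)}$ and $C=\{p^k:k\geq 0\}$: then $J(R)=pR$, $C^{-1}R=\mathbb{Q}$, $J(\mathbb{Q})=0$, yet $pR\cdot\mathbb{Q}=\mathbb{Q}$. Your parenthetical justification only shows that $1-ra$ remains invertible in $C^{-1}R$ for $r\in R$; it says nothing about $r\in C^{-1}R$, and indeed $1-p^{-1}\cdot p=0$ in the example. So the passage from $J(K[G])=0$ to $J(K[S])=0$ is not established.

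The paper handles $J(K[S])=0$ by a completely different route: a trace argument inside $K[G]$. One takes a nonzero homogeneous $\alpha\in J(K[S])$ (homogeneous because $J$ is graded, and nilpotent by \cite[Theorem~22.6]{passmancrossed}), uses the normal form of Lemma~\ref{form} to write $\alpha=\sum\alpha_{i_1,\dots,i_{l-1}}x_1^m x_{i_1}\cdots x_{i_{l-1}}$ with $l\mid m$, picks $(j_1,\dots,j_{l-1})$ with $\alpha_{j_1,\dots,j_{l-1}}\neq 0$, and right-multiplies by $x_{j_{l-1}}^{l-1}\cdots x_{j_1}^{l-1}$ so that $x_1^{m+l(l-1)}$ appears in the support. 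Dividing by this central power of $x_1$ yields a nilpotent element of $K[G]$ with $\tr$ equal to $\alpha_{j_1,\dots,j_{l-1}}\neq 0$, contradicting \cite[Lemma~2.3.3]{passman} (applicable since, under the hypothesis on $\ch(K)$, $G$ has no elements of order $\ch(K)$, as $T(G)\cong H^{l-1}$).

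Your localization idea can be repaired, but it needs an extra ingredient you did not supply: one must first know that $J(K[S])$ is \emph{nilpotent}. Since $K[S]$ is a finitely generated module over $K[x_1^l]$, it is an affine Noetherian PI $K$-algebra, and for such algebras $J$ coincides with the nilradical and is therefore nilpotent. Then $J(K[S])K[G]$ is a two-sided nilpotent ideal of the semiprime ring $K[G]$, hence zero, and injectivity of the central localization $K[S]\hookrightarrow K[G]$ gives $J(K[S])=0$. Without the nilpotency step, the localization argument does not go through.
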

\begin{proof}
The first part of the result follows by Proposition~\ref{group} and
\cite[Theorem~7.3.1]{passman}. Therefore, since $H^{l-1}\cong T(G)$,
by Maschke's Theorem, if $\ch (K) = 0$ or $\ch (K) = p$ and $p \nmid
|H| $, then $J(K[T(G)])=0$. Since $J(K[G])\subseteq J(K[T(G)])K[G]$,
we have $J(K[G])=0$ in this case. Note that $K[S]\subseteq K[G]$.

Suppose that $\ch (K) = 0$ or $\ch (K) = p$ and $p \nmid |H| $. Let
$\alpha \in J(K[S])$ be a homogeneous element (with respect to the
gradation defined by the natural length function on $S$). Then
$\alpha$ is nilpotent (see for example
\cite[Theorem~22.6]{passmancrossed}). We assume that $\alpha \neq
0$. Since $S$ is cancellative, multiplying by  $x_1^{l}$, we may
assume that  the degree of $\alpha$ is greater than $l-1$ and
according to the normal form obtained in the previous section we can
write any element in the support of $\alpha$ as $x_1^mx_{i_1}\cdots
x_{i_{l-1}}$ for some $i_j \in \{1, \ldots , n\}$. Let $I=\{
1,2,\dots ,n\}$. Then $\alpha = \sum_{(i_1,\dots ,i_{l-1})\in
I^{l-1}} \alpha_{i_1,\dots ,i_{l-1}} x_1^m x_{i_1}\cdots
x_{i_{l-1}}$. Since $S$ is cancellative, multiplying by a power of
$x_1$, we may assume that $m$ is a multiple of $l$. Suppose that
$\alpha_{j_1,\dots ,j_{l-1}} \neq 0$. Then \allowdisplaybreaks{
\begin{align*}
\alpha x_{j_{l-1}}^{l-1}\cdots x_{j_1}^{l-1} &= \sum_{(i_1,\dots
,i_{l-1})\in I^{l-1}} \alpha_{i_1,\dots ,i_{l-1}} x_1^m
x_{i_1}\cdots x_{i_{l-1}}  x_{j_{l-1}}^{l-1}\cdots x_{j_1}^{l-1}\\
&=\alpha_{j_1,\dots ,j_{l-1}} x_1^{m+l(l-1)} + \sum_{ (i_1,\dots
,i_{l-1})\in I^{l-1}\setminus\{(j_1,\dots ,j_{l-1})\}}
\alpha_{i_1,\dots ,i_{l-1}} x_1^m x_{i_1}\cdots
x_{i_{l-1}}x_{j_{l-1}}^{l-1}\cdots x_{j_1}^{l-1}
\end{align*}}
Thus $x_1^{m+l(l-1)}\in \supp (\alpha x_{j_{l-1}}^{l-1}\cdots
x_{j_1}^{l-1} )$.  For $\beta = \sum_{g \in G} \beta_g g $, the
trace map is defined by $ \tr \beta = \tr \left( \sum_{g \in G}
\beta_g g \right)= \beta_1$.  Since $\alpha x_{j_{l-1}}^{l-1}\cdots
x_{j_1}^{l-1}\in J(K[S])$ is a homogeneous element, we know that it
is nilpotent. Since $x_1^l$ is central, $\alpha
x_{j_{l-1}}^{l-1}\cdots x_{j_1}^{l-1} x_1^{-(m+l(l-1))}$ is a
nilpotent element in $K[G]$. Clearly,
$$\tr
\left( \alpha x_{j_{l-1}}^{l-1}\cdots x_{j_1}^{l-1}
x_1^{-(m+l(l-1))} \right) =  \alpha_{j_1,\dots ,j_{l-1}}.$$ As by
assumption $\alpha_{j_1,\dots ,j_{l-1}} \neq 0$, this is in
contradiction with \cite[Lemma~2.3.3]{passman}.
 Since $J(K[S])$ is a
homogeneous ideal by \cite[Theorem~22.6]{passmancrossed}, we have
$J(K[S])=0$.
\end{proof}

\begin{theorem}\label{T2}
Let $H$ be a transitive subgroup of $\Sym_n$. Let $S=S_{n,l}(H)$.
Let $K$ be a field. Then $K[S]$ is a noetherian PI-algebra and
$\GKdim(K[S])=1=\clKdim(K[S]$. Furthermore, if $H$ also is abelian
and has no elements of order $\ch(K)$, and $G$ is the group of
fractions of $S$, then $K[G]$ is a PI-algebra and it is a principal
right (and left) ideal ring.
\end{theorem}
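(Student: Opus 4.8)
The plan is to exploit the central element $x_1^l$ together with the normal form of Lemma~\ref{form}. Since $H$ is transitive there is a single orbit, so in the notation of Lemma~\ref{form} the free submonoid is $F=\langle x_1\rangle$ and every $s\in S$ with $|s|\geq l-1$ may be written as $x_1^{a}x_{j_1}\cdots x_{j_{l-1}}$ with $j_i\in\{1,\dots,n\}$; reducing $a$ modulo $l$ and using that $x_1^l=x_i^l$ is central (Lemma~\ref{central}), one sees that $K[S]$ is spanned as a module over the central subalgebra $C:=K[x_1^l]$ by the finitely many elements of length $<l-1$ together with the elements $x_1^{a}x_{j_1}\cdots x_{j_{l-1}}$ with $0\le a<l$. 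Moreover $C\cong K[t]$ because $F$ is free on $x_1$. So $K[S]$ is module-finite over a central polynomial subalgebra, and I would then read off the first assertions: $K[S]$ is right and left Noetherian, it is PI (a ring spanned by finitely many elements over a commutative central subring satisfies a standard identity), and $\GKdim K[S]=\GKdim K[t]=1$ as well as $\clKdim K[S]=\clKdim K[t]=1$, the last two equalities following from monotonicity of GK dimension under subalgebras and finite module extensions, and from the lying-over, incomparability and going-up properties of the module-finite extension $C\subseteq K[S]$ (together with the fact that $K[S]$ is Noetherian but not Artinian, so $\clKdim K[S]\geq 1$).

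For the second part I would invoke Proposition~\ref{group}: the group of fractions $G$ exists, $G=T\rtimes\gr\langle x_1\rangle$ with $T:=T(G)\cong H^{l-1}$ finite abelian and $\gr\langle x_1\rangle$ infinite cyclic, and $\gr\langle x_1^l\rangle$ is central of finite index in $G$. Hence $K[G]$ is a finitely generated module over the commutative central subring $K[\gr\langle x_1^l\rangle]\cong K[t,t^{-1}]$, so $K[G]$ is Noetherian and PI by the same reasoning as above. This disposes of everything except the principal ideal ring statement.

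For the principal ideal ring part I would argue as follows. Write $K[G]=K[T][x_1,x_1^{-1};\phi]$, the skew Laurent polynomial ring, where $\phi$ is the automorphism of $K[T]$ induced by conjugation by $x_1$; note $\phi^l=\id$ since $x_1^l$ is central in $G$. By hypothesis $\ch(K)$ does not divide $|H|$, hence not $|T|=|H|^{l-1}$, so Maschke's theorem gives $K[T]=\prod_i L_i$, a finite product of finite field extensions $L_i$ of $K$. The automorphism $\phi$ permutes the primitive central idempotents $e_i$ of $K[T]$; grouping them into $\phi$-orbits $O$ and setting $\varepsilon_O=\sum_{i\in O}e_i$, each $\varepsilon_O$ is central in $K[G]$ and $K[G]=\prod_O \varepsilon_O K[G]$. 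For an orbit $O$ with $|O|=d$, a direct computation in the crossed product identifies $\varepsilon_O K[G]$ with $M_d(\Lambda_O)$, where for a chosen $e=e_i\in O$, $L=L_i$, one has $\Lambda_O=eK[G]e\cong L[u,u^{-1};\psi]$ with $u=x_1^{d}e$ a normal unit and $\psi=\phi^d|_L$ an automorphism of $L$ of finite order. Now $L[u;\psi]$ is a principal left and right ideal domain (division algorithm in $\deg$), $\Lambda_O$ is its Ore localization at the powers of the normal element $u$ and is therefore again a principal right and left ideal ring, and consequently each $M_d(\Lambda_O)$, and hence $K[G]$, is a principal right and left ideal ring.

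The main obstacle is this last step: carrying out the crossed-product bookkeeping that produces the block decomposition $K[G]\cong\prod_O M_{d_O}\!\big(L_O[u,u^{-1};\psi_O]\big)$, and then invoking (or reproving) the ring-theoretic facts that a skew polynomial ring over a division ring is a noncommutative principal ideal domain, that an Ore localization of such a ring is a principal ideal ring, and that a matrix ring over a principal ideal ring is a principal ideal ring. By contrast the semisimplicity of $K[T]$ and the module-finiteness over the central Laurent subalgebra, which set everything up, are routine.
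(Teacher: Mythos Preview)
Your argument for the first assertion (Noetherian, PI, $\GKdim=1=\clKdim$) is exactly the paper's: one simply observes that $K[S]$ is a finitely generated module over the central polynomial subalgebra $K[\langle x_1^l\rangle]\cong K[t]$, and everything follows. You spell out more of the standard consequences (Artin--Tate, monotonicity of $\GKdim$, incomparability/going-up for $\clKdim$), but the idea is identical.

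For the second assertion the paper takes a much shorter route than you do. It does not perform any crossed-product decomposition at all: it just invokes Proposition~\ref{group} to see that $G$ is (finite abelian)-by-(infinite cyclic) with $|T(G)|=|H|^{l-1}$ coprime to $\ch K$, and then cites \cite[Theorem~4.1]{passman1}, which is precisely a criterion for a group algebra to be a principal ideal ring. Your approach---split $K[T]$ into fields via Maschke, follow the $\phi$-orbits of primitive idempotents to obtain $K[G]\cong\prod_O M_{d_O}\!\big(L_O[u,u^{-1};\psi_O]\big)$, and then check each factor is a PIR---is a genuine alternative: you are essentially reproving the relevant special case of Passman's theorem by hand. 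The payoff is a self-contained argument that also displays the structure of $K[G]$ explicitly; the cost is the bookkeeping you already flag, together with the three ring-theoretic facts you use without proof (skew polynomial rings over a division ring are left and right PIDs; Ore localization at a normal element preserves this; $M_d$ of a two-sided PID is again a principal ideal ring). All three are correct---the last via a noncommutative Smith normal form argument---but none is entirely trivial, so in practice one would still cite standard sources, at which point the paper's one-line appeal to Passman is the more economical choice.
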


\begin{proof} Note that $K[S]$ is a finitely generated right module
over $K[\langle x_1^l\rangle]$. Thus the first part of the result
follows. The second part is a consequence of Proposition~\ref{group}
and \cite[Theorem~4.1]{passman1}
\end{proof}

\begin{theorem}\label{T3} Let $H$ be a non-transitive subgroup of
$\Sym_n$. Let $S=S_{n,l}(H)$. Let $K$ be a field. Then $K[S]$ has
exponential growth. Furthermore, if $H$ is also semi-regular and
abelian, then $K[S]$ is finitely generated free right (left) module
over a finitely generated free $K$-algebra.
\end{theorem}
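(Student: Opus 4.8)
The plan is to treat the two assertions separately. For the exponential growth statement, recall from Lemma~\ref{form} that the submonoid $F=\langle x_{i_1},\dots,x_{i_r}\rangle$ generated by orbit representatives is free of rank $r$, and since $H$ is non-transitive we have $r\geq 2$. A free monoid of rank at least $2$ has exponential growth, so the number of elements of $S_{n,l}(H)$ of length at most $m$ is bounded below by the corresponding count in $F$, which grows like $2^m$. Hence $K[S]$ contains a free subalgebra $K[F]=K\langle x_{i_1},\dots,x_{i_r}\rangle$ on $r\geq 2$ generators, and a subalgebra generated by part of a fixed finite generating set inherits the growth bound; this gives exponential growth of $K[S]$ itself. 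This part is essentially immediate from Lemma~\ref{form} and standard facts about growth, so I expect no difficulty here.

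For the second assertion, assume $H$ is semi-regular and abelian. By Theorem~\ref{cancellative}, $S=S_{n,l}(H)$ is cancellative, and by Theorem~\ref{universalgroupThree} it embeds in its universal group. The strategy is to exhibit a finitely generated free $K$-subalgebra $R\subseteq K[S]$ over which $K[S]$ is a finitely generated free module. The natural candidate is $R=K[F]=K\langle x_{i_1},\dots,x_{i_r}\rangle$, the free algebra on the orbit representatives, using Lemma~\ref{form}: every element $s\in S$ of length $\geq l-1$ can be written as $s=w_1x_{j_1}\cdots x_{j_{l-1}}$ with $w_1\in F$ \emph{unique} and the tail of length $l-1$ drawn from a finite set. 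So $K[S]$ is spanned as a left $R$-module by the finite set $B$ consisting of $1$, all $x_j$, all products $x_{j_1}x_{j_2}$, \dots, all products $x_{j_1}\cdots x_{j_{l-1}}$ of length at most $l-1$. To get a genuine \emph{free} module, one refines $B$ to a subset $B'$ by selecting, for each of the finitely many length-$(l-1)$ words $x_{j_1}\cdots x_{j_{l-1}}$ that are \emph{equal} in $S$, a single representative (and similarly for shorter words, where by Lemma~\ref{form} there is actually no collapsing, so those stay); the uniqueness of $w_1$ in Lemma~\ref{form} then shows the decomposition $s=w_1\,b$ with $w_1\in F$, $b\in B'$ is unique, so $\{b : b\in B'\}$ is a free left $R$-basis of $K[S]$.

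The main obstacle is the bookkeeping for words of length exactly $l-1$: Lemma~\ref{form} guarantees uniqueness of the free-part $w_1$ and $w_2$ but the residual factor $x_{j_1}\cdots x_{j_{l-1}}$ is \emph{not} unique as a word (distinct index sequences may become equal in $S$, via $H$), so one must choose a transversal for the equivalence classes of these short tails and verify that with this choice the left-module decomposition is genuinely unique, not merely spanning. Here cancellativity (Theorem~\ref{cancellative}) is what allows one to conclude that if $w_1 b = w_1' b'$ with $w_1,w_1'\in F$ and $b,b'\in B'$ then comparing the (unique) free parts via Lemma~\ref{form} forces $w_1=w_1'$ and then $b=b'$. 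One should also check the symmetric statement with $w_2$ on the right to get the ``left'' version; the argument is entirely parallel. Finally, $R=K\langle x_{i_1},\dots,x_{i_r}\rangle$ is by the first paragraph a finitely generated free $K$-algebra (free because $F$ is a free monoid), completing the proof.
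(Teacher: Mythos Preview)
Your treatment of exponential growth is correct and essentially the same as the paper's: Lemma~\ref{form} provides a free submonoid $F$ of rank $r\geq 2$ generated by orbit representatives, and this forces exponential growth.

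The second part, however, has a genuine gap. Your proposed basis $B'$ --- all words of length at most $l-1$, up to equality in $S$ --- is \emph{not} a free left $R$-basis. First, a minor point: since the defining relations all have length $l$, distinct words of length $\leq l-1$ are already distinct in $S$, so your ``refinement'' does nothing and $B'=B$. More importantly, the decomposition $s=w_1 b$ with $w_1\in F$ and $b\in B'$ is not unique: take $s=x_{i_1}$, which can be written as $x_{i_1}\cdot 1$ (with $w_1=x_{i_1}\in F$, $b=1$) or as $1\cdot x_{i_1}$ (with $w_1=1$, $b=x_{i_1}$). Your appeal to Lemma~\ref{form} does not help here, because that lemma gives uniqueness of $w_1$ only for the decomposition with tail of length \emph{exactly} $l-1$; once you also allow shorter tails in $B'$, you can freely trade letters between $w_1$ and $b$ whenever the leading letters of $b$ happen to be orbit representatives.

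The paper's fix is precisely to block this absorption. Working with right $R$-modules and prefixes instead, it takes as basis $1$ together with all words of length $\leq l-1$ whose \emph{last} letter is a non-orbit-representative, i.e.
\[
K[S]=R\;\oplus\;\bigoplus_{j=1}^{r}\bigoplus_{\sigma\in H\setminus\{\id\}} x_{\sigma(i_j)}R\;\oplus\;\bigoplus_{k=1}^{l-2}\bigoplus_{(j_1,\dots ,j_k)\in I^k}\bigoplus_{j=1}^{r}\bigoplus_{\sigma\in H\setminus\{\id\}} x_{j_1}\cdots x_{j_k}x_{\sigma(i_j)}R.
\]
The point is that such a prefix $p$ cannot have its last letter absorbed into $F$ on the right, so the factorisation $s=p\,w_2$ with $w_2\in F$ maximal is forced. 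Directness then follows from cancellativity together with the fact that words of length $\leq l-1$ are rigid: if $p_1w_1=p_2w_2$ with $|p_1|\leq|p_2|$, one cancels a common right factor in $F$ to get $p_1w''=p_2$ as words of length $\leq l-1$, and if $w''\neq 1$ its last letter is an orbit representative, contradicting the form of $p_2$. Your argument can be repaired by making exactly this restriction on $B'$ (dually, for the left-module version, require the \emph{first} letter of each nontrivial $b$ to be a non-orbit-representative).
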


\begin{proof} Assume that the
different orbits under the action of $H$ are
$X_{i_1},\dots,X_{i_r}$. Since $H$ is not transitive, $r>1$. By
Lemma~\ref{form}, $K[S]$ is a finitely generated right (left) module
over the free subalgebra $K\langle x_{i_1},\dots ,x_{i_r} \rangle$
of rank $r$. Thus the growth of $K[S]$ is exponential.

Suppose that $H$ is also semi-regular and abelian. Let $R=K\langle
x_{i_1},\dots ,x_{i_r} \rangle$. Let $I=\{ 1,\dots ,n\}$. By
Lemma~\ref{form},
$$K[S]=R+\sum_{j=1}^r\sum_{\sigma\in H\setminus\{\id\}}x_{\sigma(i_j)}R
+\sum_{k=1}^{l-2}\sum_{(j_1,\dots ,j_{k})\in
I^k}\sum_{j=1}^r\sum_{\sigma\in H\setminus\{\id\}}x_{j_1}\cdots
x_{j_k}x_{\sigma(i_j)}R.$$ By Theorem~\ref{cancellative}, $S$ is a
cancellative monoid. Therefore it is easy to see that the above sum
is a direct sum of right $R$-modules isomorphic to $R$. Similarly we
have a decomposition of $K[S]$ as a direct sum of left $R$-modules
isomorphic to $R$.
\end{proof}

\vspace{30pt}
 \noindent \begin{tabular}{llllllll}
 F. Ced\'o && E. Jespers  \\
 Departament de Matem\`atiques &&  Department of Mathematics \\
 Universitat Aut\`onoma de Barcelona &&  Vrije Universiteit Brussel  \\
08193 Bellaterra (Barcelona), Spain    && Pleinlaan
2, 1050 Brussel, Belgium \\
 cedo@mat.uab.cat && efjesper@vub.ac.be\\
   &&   \\
G. Klein &&  \\ Department of Mathematics &&
\\  Vrije Universiteit Brussel && \\
Pleinlaan 2, 1050 Brussel, Belgium &&\\ gklein@vub.ac.be&&
\end{tabular}
\end{document}